\newcommand{\qedclaim}{\hfill $\diamond$ \medskip}
\newenvironment{proofclaim}{\noindent{\em Proof of the claim.}}{\qedclaim}
\theoremstyle{plain}
\newtheorem{theorem}{Theorem}
\newtheorem{claim}{Claim}[theorem]
\theoremstyle{definition}
\newtheorem{conjecture}[theorem]{Conjecture}
\begin{document}

\title{Immersion of transitive tournaments in digraphs with large minimum
outdegree\footnote{This work was supported by ANR under contract STINT ANR-13-BS02-0007.}}

\author{ William Lochet$^{a,b}$\\~\\
\small $^a$ Universit\'e C\^ote d'Azur, CNRS, I3S, UMR 7271, \\ \small Sophia Antipolis, France\\~\\
\small $^b$Laboratoire d'Informatique du Parall\'elisme \\ \small   UMR 5668 ENS Lyon - CNRS - UCBL - INRIA
\\ \small Universit\'e de Lyon, France\\~\\
}

\date{}

\maketitle
\begin{abstract}
We prove the existence of a function $h(k)$ such that every simple digraph with 
minimum outdegree greater than $h(k)$ contains an immersion of the transitive 
tournament on $k$ vertices. This solves a conjecture of Devos, McDonald, Mohar and Scheide.
\end{abstract}

In this note, all digraphs are without loops. A digraph $D$ is \textit{simple} if 
there is at most one arc from $x$ to $y$ for any $x,y \in V(D)$. Note that arcs in opposite 
directions are allowed. The \textit{multiplicity} of a digraph $D$ is the maximum number of parallel arcs 
in the same direction in $D$.  
We say that a digraph $D$ contains an \textit{immersion} of a digraph $H$ if the vertices of $H$ are
mapped to distinct vertices of $D$, and the arcs of $H$ are mapped to directed
paths joining the corresponding pairs of vertices of $D$, in such a way that
these paths are pairwise arc-disjoint. If the directed paths are vertex-disjoint, we say that 
$D$ contains a \textit{subdivision} of $H$. 

Understanding the necessary conditions for graphs to contain a subdivision of a clique is a very natural 
and well-studied question. One of the most important examples is the following result by Mader \cite{Mader67}:

\begin{theorem}[\cite{Mader67}]
For every $k \geq 1$, there exists an integer $f(k)$ such that every graph with minimum
degree greater than $f(k)$ contains a subdivision of $K_k$.
\end{theorem}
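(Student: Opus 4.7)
The plan is to prove the theorem in two stages. Stage~1: from the large minimum degree hypothesis, extract a highly connected subgraph. Stage~2: inside that subgraph, construct the topological $K_k$ by choosing $k$ branch vertices and linking them pairwise by internally disjoint paths.

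For Stage~1, I would use the classical extraction principle that every graph of sufficiently large average degree contains a highly connected subgraph. The underlying lemma, essentially due to Mader, states that any graph of average degree at least $4c$ contains a subgraph that is $c$-connected; it is proved by iteratively restricting to one side of a small vertex cut whenever one exists, while a potential argument shows that the average degree does not collapse in the process. Applied to $G$, whose minimum degree exceeds $f(k)$ and whose average degree is therefore also at least $f(k)+1$, this produces a subgraph $H$ of connectivity $c = \Omega(f(k))$, as large as we need provided $f(k)$ is chosen large enough.

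For Stage~2, I would fix $k$ branch vertices $v_1,\ldots,v_k$ in $H$ and aim to link each pair $(v_i,v_j)$ by an internally disjoint path. The cleanest route is to invoke a linkage theorem in the style of Thomas--Wollan: there is a function $g$ such that every $g(k)$-connected graph is $k$-linked, so any $k$ prescribed pairs of vertices can be joined by internally disjoint paths. Applied with parameter $\binom{k}{2}$, and with $f(k)$ chosen so that $c \ge g(\binom{k}{2})$, this produces all the required paths simultaneously and closes the argument. A more elementary alternative is to add the paths greedily, one at a time, repeatedly applying Menger's theorem in the subgraph obtained from $H$ by deleting the (few) internal vertices of paths already built.

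The hard part will be Stage~2 if one insists on an explicit and reasonable bound on $f(k)$. The greedy approach forces an induction that must simultaneously bound the lengths of new paths (to avoid depleting the connectivity budget too fast) and maintain enough structure on what remains of $H$; this is what makes Mader's original proof technically intricate. Invoking the linkage theorem hides this difficulty inside a black box but costs quite a lot in the resulting bound. I expect the most delicate point to be precisely the path-length bookkeeping between successive Menger applications.
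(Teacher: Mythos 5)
This statement is Theorem~1 of the paper, which is quoted from Mader's 1967 paper and not proved here; the note only uses it as background, so there is no in-paper proof to compare against. Judged on its own, your two-stage outline (extract a highly connected subgraph via Mader's ``average degree at least $4c$ forces a $c$-connected subgraph'' lemma, then realize the branch vertices and connecting paths inside it) is a standard and correct modern route to the existence of $f(k)$, though it is quite different from Mader's original 1967 argument and, via the linkage black box, gives much worse bounds than the $O(k^2)$ of Bollob\'as--Thomason and Koml\'os--Szemer\'edi.

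Two caveats you should make explicit. First, a circularity warning: the original proofs that high connectivity implies $k$-linkedness (Jung, Larman--Mani) themselves rest on the existence of large topological cliques, i.e.\ on the very theorem you are proving; to keep the argument non-circular you must invoke a linkage theorem proved via minors (Bollob\'as--Thomason or Thomas--Wollan), whose proofs are independent of subdivision results. Second, the application of $\binom{k}{2}$-linkedness is not quite as stated: $k$-linkedness requires the $2k$ terminals to be distinct, whereas in your pairing each branch vertex $v_i$ occurs $k-1$ times, and moreover the linkage paths are not guaranteed to avoid the other branch vertices. The standard repair is to choose, for each $v_i$, a set of $k-1$ private neighbors (possible since the minimum degree in the highly connected subgraph $H$ is large), delete the branch vertices --- which lowers connectivity by at most $k$, so $H$ minus the branch vertices is still $\binom{k}{2}$-linked if $f(k)$ is chosen accordingly --- and then link the appropriate pairs of neighbors by vertex-disjoint paths; together with the stars at the $v_i$ this yields the subdivision of $K_k$. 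With these adjustments the proposal is sound; the greedy Menger alternative you mention is indeed much more delicate, for exactly the bookkeeping reasons you identify.
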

Bollob\'{a}s and Thomason \cite{Bollo} as well as Koml\'{o}s and Szemer\'edi \cite{Kom} proved that  $f(k) = O(k^2)$.
In the case of digraphs, there exist examples of digraphs with large out- and indegree
without a subdivision of the complete digraph on three vertices, as shown by Thomassen \cite{Thomassen}.
However Mader \cite{Mader85}
conjectured that an analogue should hold for transitive
tournaments $TT_{k}$ in digraphs with large minimum outdegree.

\begin{conjecture}[\cite{Mader85}]
For every $k\geq 1$, there exists an integer $g(k)$ such that every simple digraph with minimum
outdegree at least $g(k)$ contains a subdivision of $TT_k$.
\end{conjecture}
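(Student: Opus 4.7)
The plan is to attack the conjecture by induction on $k$, mirroring the overall scheme of Mader's undirected theorem but adapted to respect the vertex-disjointness demanded by subdivisions. The base cases $k \le 2$ are immediate since any vertex with positive outdegree already gives a copy of $TT_2$. For the inductive step, I assume that $g(k-1)$ has been defined, fix a simple digraph $D$ with minimum outdegree at least $g(k)$ (to be chosen sufficiently large in terms of $g(k-1)$ and $k$), and locate a subdivision $S$ of $TT_{k-1}$ with branch vertices $v_1,\dots,v_{k-1}$ by applying the inductive hypothesis to a suitably prepared subdigraph.

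Before invoking induction I would first extract a \emph{robust core} $D' \subseteq D$ on which every vertex still has large outdegree and, crucially, controlled in-structure. The standard iterative deletion trick (repeatedly discarding any vertex of outdegree below the target) yields a subdigraph of large minimum outdegree, but gives no leverage on indegrees, and Thomassen's examples warn that naive attempts to symmetrise will fail. I would therefore pre-process by a density/potential argument: weigh each vertex by a combination of local in- and out-contributions and pass to a sub-digraph where an averaging inequality forces a positive fraction of vertices to have moderately large indegree into the surviving set. This is the stage where I most expect to be pushed into new territory.

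Given $TT_{k-1}$ realised as $S \subseteq D'$, the extension step tries to find a fresh vertex $v_k$ together with $k-1$ internally vertex-disjoint paths from $v_1,\dots,v_{k-1}$ to $v_k$, all avoiding the internal vertices of $S$; alternatively, dually, a new source $v_0$ with out-paths to each $v_i$. The key auxiliary claim would be a \emph{directed fan lemma}: in a digraph of sufficiently large minimum outdegree and for any prescribed set $X$ of size $k-1$, there exists a vertex $v$ outside a given small forbidden set together with $k-1$ internally vertex-disjoint paths from $X$ to $v$. To prove the fan lemma I would iteratively grow a reachability structure from $X$ using BFS-like layers, with the outdegree hypothesis forcing each layer to expand, and then apply Menger's theorem once the reachable set is comfortably larger than $|X|$ plus the forbidden set. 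Calibrating $g(k)$ so that the forbidden set (the $O(k^2)$ internal vertices of $S$) is absorbed is a bookkeeping step.

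The hard part, and indeed the reason the conjecture has resisted attack, is precisely the vertex-disjoint linkage: high minimum outdegree does not imply the kind of strong connectivity that would make the fan lemma true in full generality, as Thomassen-type constructions demonstrate for complete digraphs. So the crux of any honest proof must explain why the \emph{transitive} tournament avoids these pathologies — perhaps by exploiting the acyclic orientation of $TT_k$ to route each added path through a carefully chosen topological strip of $D'$, or by running the whole argument inside a dense acyclic subdigraph obtained from a maximum-acyclic-subgraph reduction. I expect that the essential new idea will be a mechanism that translates the outdegree assumption into the ability to redirect one path around another at an internal collision, so as to upgrade an arc-disjoint linkage into a vertex-disjoint one at the cost of only a bounded multiplicative loss in the degree bound.
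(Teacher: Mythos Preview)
The statement you are attempting to prove is Mader's \emph{conjecture}, and the paper does not prove it; indeed, the paper states explicitly that even the existence of $g(5)$ is unknown. What the paper actually establishes is the weaker Conjecture~3 (immersion of $TT_k$ rather than subdivision), via Theorem~4. So there is no ``paper's own proof'' to compare against here: you are trying to settle an open problem.

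Your proposal is not a proof but a plan, and you correctly identify where it breaks. The directed fan lemma you want --- that large minimum outdegree forces, for any small set $X$, a vertex $v$ with internally vertex-disjoint paths from each element of $X$ to $v$ --- is exactly the kind of vertex-disjoint linkage statement that Thomassen's constructions obstruct: high minimum outdegree (even together with high minimum indegree) does not yield the connectivity needed for Menger to produce vertex-disjoint fans of the required type. Your BFS-expansion argument shows that many vertices are reachable from $X$, but Menger applied to the pair $(X,v)$ only gives $|X|$ arc-disjoint or internally-disjoint paths when the relevant cut is large, and nothing in the outdegree hypothesis prevents a small vertex cut separating $X$ from most of the digraph. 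The final paragraph of your proposal openly concedes that the ``essential new idea'' is missing; without it, the induction step does not go through, and the sketch is a restatement of why the conjecture is hard rather than a resolution of it.
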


The question turned out to be way more difficult than the non oriented case, as
the existence of $g(5)$ remains unknown. Weakening the statement, Devos, McDonald,
Mohar and Scheide \cite{Devos}
made the following conjecture replacing subdivision with immersion and proved
it for the case of eulerian digraphs. 

\begin{conjecture}[\cite{Devos}]
For every $k \geq 1$, there exists an integer $h(k)$ such that every simple digraph with minimum
outdegree at least $h(k)$ contains an immersion of $TT_k$.
\end{conjecture}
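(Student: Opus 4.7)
The plan is to prove the conjecture by induction on $k$. Assume $h(k-1)$ exists, and let $D$ be a simple digraph of minimum outdegree at least $h(k)$, with $h(k)$ to be determined. I would build an immersion of $TT_k$ by selecting its image vertices $v_1,\dots,v_k$ one by one in order: at step $i$, a new vertex $v_i\notin\{v_1,\dots,v_{i-1}\}$ is chosen together with $i-1$ arc-disjoint directed paths from each earlier $v_j$ to $v_i$, all arc-disjoint from paths already built. Because the whole family of $\binom{k}{2}$ paths is pairwise arc-disjoint, at most $\binom{k}{2}$ out-arcs are consumed at each vertex over the entire construction, so the residual digraph still has minimum outdegree at least $h(k)-\binom{k}{2}$ at every stage.

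The technical core is a \emph{common-fan lemma}: there exists a function $\phi(p)$ such that in any simple digraph with minimum outdegree at least $\phi(p)$, for every set $U$ of $p$ vertices there is a vertex $w\notin U$ together with $p$ arc-disjoint paths, one from each vertex of $U$ to $w$. Once this is established, setting $h(k):=\phi(k-1)+\binom{k}{2}$ makes the iterative construction work: at step $i$ the residual digraph satisfies the hypothesis with $p=i-1\le k-1$, furnishing the new vertex and the required paths.

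I would prove the common-fan lemma by preprocessing $D$ to a highly arc-connected sub-digraph and then invoking Menger's theorem. First replace $D$ by a terminal strongly connected component; this preserves the outdegree condition since terminal-component vertices have no out-arcs outside the component. Then iterate: while the current sub-digraph is not strongly $\kappa$-arc-connected, for a target $\kappa$ depending on $k$, take an arc-cut of size strictly less than $\kappa$, restrict to the source side (in which every vertex loses at most $\kappa-1$ out-arcs), and pass to one of its terminal strongly connected components. Each reduction strictly decreases the vertex set and drops the minimum outdegree by at most $\kappa-1$. When the procedure halts, the resulting sub-digraph $D^\star$ is strongly $\kappa$-arc-connected, and Menger's theorem applied inside $D^\star$ with a super-source feeding unit capacity into each $u_i$ and any $w\notin U$ as sink produces the desired $p$ arc-disjoint paths.

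The main obstacle is the quantitative bookkeeping. The preprocessing loses outdegree linearly in $\kappa$ per iteration, and the iterative construction of $TT_k$ consumes $\binom{k}{2}$ out-arcs at each vertex; most delicately, the $\kappa$-arc-connectivity of the residual digraph can collapse sharply when a chosen family of paths consumes an entire small arc-cut, so the preprocessing may have to be redone after each step. Ensuring that the whole procedure terminates with a finite $h(k)$ independent of $|V(D)|$ — in particular, that the iterative cut-reduction does not exhaust the outdegree budget before reaching a strongly $\kappa$-arc-connected sub-digraph — will require a careful choice of $\kappa$ (polynomial in $k$) and possibly re-preprocessing at each step, keeping the already-placed $v_j$'s inside via slack in the connectivity parameter. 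Calibrating these bounds is where the main technical effort will lie.
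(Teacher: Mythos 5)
Your plan hinges on the ``common-fan lemma'': for \emph{every} set $U$ of $p$ vertices in a simple digraph of large minimum outdegree there is a vertex $w$ receiving $p$ arc-disjoint paths, one from each vertex of $U$. This lemma is false as stated, and the failure is exactly the difficulty of the problem. Minimum outdegree gives no reachability guarantee whatsoever: take the disjoint union of two large complete digraphs and let $U$ meet both components; no vertex is reachable from all of $U$. Passing to a terminal strongly connected component or to one side of a small cut does not repair this, because the quantification is over prescribed sets $U$ --- in your iteration $U=\{v_1,\dots,v_{i-1}\}$ was fixed at earlier steps, and after you restrict to a highly connected piece $D^\star$ there is no reason the old $v_j$'s lie in $D^\star$, can reach it at all, or can reach it by arc-disjoint paths avoiding a bottleneck arc. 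The remark about ``keeping the already-placed $v_j$'s inside via slack in the connectivity parameter'' is precisely the missing argument, not a calibration issue. In addition, your cut-reduction preprocessing does not obviously terminate with bounded loss: each iteration can cost a single vertex up to $\kappa-1$ out-arcs, the number of iterations is bounded only by $|V(D)|$, so the cumulative outdegree loss is not a function of $\kappa$ alone; whether large minimum outdegree even forces a strongly $\kappa$-arc-connected subdigraph is itself a delicate question that your sketch does not settle.

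The paper sidesteps the need for fans from all earlier vertices, which is the key idea your proposal lacks. It proves a stronger, more robust statement by induction: every digraph of large minimum outdegree and bounded multiplicity immerses $F(k,l)$, the directed path on $k$ vertices with $l$ parallel arcs between consecutive vertices. Since $F(k,\binom{k}{2})$ immerses $TT_k$ (every arc of $TT_k$ is routed along the backbone using the multiplicities), only the \emph{last} vertex $x_k$ of the current structure ever needs to send new arc-disjoint paths, and only $l$ of them to a single new vertex. Even this is not done greedily: in a minimal counterexample, if no vertex $y$ admits $l$ arc-disjoint paths from $x_k$, Menger's theorem gives small cuts $E_y$, and these cuts are used to carve out a strictly smaller counterexample $H$ (with a bounded number of low-outdegree vertices and controlled multiplicity), contradicting minimality. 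Some such strengthening of the induction hypothesis, or a different mechanism replacing the false common-fan lemma, is needed before your outline can be turned into a proof.
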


Finding the right value for $h(k)$ in the case of non oriented graphs is an interesting question on its own
(see \cite{Devosnono} for more details).

The goal of this note is to present a proof of this conjecture.
Let $F(k,l)$ be the digraph consisting of $k$ vertices $x_1, \dots, x_k$ such that
there exists $l$ arcs from $x_i$ to $x_{i+1}$ for every $1 \leq i \leq k-1$.
It is clear that $F(k,\binom{k}{2})$ contains an immersion of $TT_k$, so the following theorem
implies Conjecture 3.

\begin{theorem}
For every $k \geq 1$ and $l$, there exists a function $f(k,l)$ such that every digraph with minimum outdegree greater
than $f(k,l)$ and multiplicity at most $kl$ contains an immersion of $F(k,l)$.
\end{theorem}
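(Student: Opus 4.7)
\smallskip
\noindent\textbf{Proof plan.} I would prove the theorem by induction on $k$. The base case $k = 1$ is trivial: $F(1, l)$ is a single vertex, so $f(1, l) := 0$ works. For the inductive step, I would choose $L = L(k, l)$ large enough that $(k-1) L \geq k l$, so that the multiplicity bound $k l$ on $D$ is compatible with the induction hypothesis applied to $F(k-1, L)$, and set $f(k, l) := f(k-1, L) + C(k, l)$ for an additive slack $C(k, l)$ to be determined. Given $D$ with minimum outdegree $> f(k, l)$ and multiplicity at most $k l$, the induction hypothesis provides an immersion $H$ of $F(k-1, L)$ in $D$ with image vertices $y_1, \ldots, y_{k-1}$. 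Setting $x_i := y_i$ for $i < k$, the task reduces to finding a new vertex $x_k$ together with $l$ pairwise arc-disjoint paths from $y_{k-1}$ to $x_k$ in $D$, all arc-disjoint from $E(H)$.

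The key accounting step is that the $L$ paths of $H$ ending at $y_{k-1}$ consume only in-arcs of $y_{k-1}$; every other path in $H$ can use at most one out-arc of $y_{k-1}$ (by an internal traversal of a simple path). Since $H$ contains $(k-2) L$ paths in total, at most $(k-2) L$ out-arcs of $y_{k-1}$ lie in $E(H)$, and the same bound applies uniformly to every vertex. Hence, after deleting $E(H)$ and also the image vertices $y_1, \ldots, y_{k-2}$ (the latter to force the eventual $x_k$ to be distinct from these previous choices), the residual digraph $D''$ still has multiplicity at most $k l$, and every vertex of $D''$ (including $y_{k-1}$) retains outdegree at least $f(k, l) - (k-2) L - (k-2) k l$, which by choice of $C(k, l)$ is as large as desired.

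The main obstacle is then the following source-fixed extension lemma, essentially the $k = 2$ content of the theorem: \emph{in a digraph of bounded multiplicity, any vertex of sufficiently large outdegree admits $l$ pairwise arc-disjoint paths to some other vertex.} I would attack this via an iterated min-cut argument in the spirit of Mader. If no such target existed in $D''$, then every $u \neq y_{k-1}$ would be separated from $y_{k-1}$ by an arc-cut of size strictly less than $l$; taking a minimal witnessing cut $S^* \ni y_{k-1}$ and restricting to $D''[S^*]$ strictly reduces the vertex set, while $y_{k-1}$ loses at most $l - 1$ out-arcs at this step. Iterating, and using the multiplicity bound to control how much outdegree can be shed at each stage, one should either reach a substructure in which no small $y_{k-1}$-cut exists --- so Menger's theorem yields the $l$ arc-disjoint paths directly --- or derive a contradiction with the outdegree hypothesis. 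An alternative route would invoke a Mader-style result on the existence of highly arc-connected subdigraphs in digraphs of large minimum outdegree and then route from $y_{k-1}$ into such a subdigraph.
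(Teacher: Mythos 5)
There is a genuine gap: the ``source-fixed extension lemma'' on which your inductive step rests is false, and no choice of $f(k,l)$ can repair it. Consider the digraph $D$ consisting of a vertex $b$ and $N$ pairwise disjoint ``blobs'' $B_1,\dots,B_N$, each a complete digraph on $N$ vertices; add one arc from $b$ to a fixed vertex $a_i\in B_i$ for each $i$, and an arc from every blob vertex to $b$. This digraph is simple (multiplicity $1$), strongly connected, and has minimum outdegree $N$, which can be taken arbitrarily large. Yet the only arc entering $B_i$ from outside $B_i$ is $(b,a_i)$, so every path (indeed every trail) from $b$ to any vertex $u\in B_i$ must use that single arc; hence there do not exist even $2$ arc-disjoint paths from $b$ to any vertex $u\neq b$. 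So ``any vertex of sufficiently large outdegree in a digraph of bounded multiplicity and everywhere large outdegree admits $l$ arc-disjoint paths to some other vertex'' is simply not true, and your iterated min-cut sketch cannot produce the promised contradiction: in this example the iteration (peel off one blob at a time, each across a cut of size $1$, ending with $\{b\}\cup B_1$ and then $\{b\}$) terminates consistently, with all other vertices keeping huge outdegree throughout. The same example defeats your alternative route via a highly arc-connected subdigraph: each $B_i$ is highly arc-connected, but one cannot route $l$ arc-disjoint paths from $b$ into it. The failure also infects the reduction itself, not just the lemma: the induction hypothesis hands you \emph{some} immersion of $F(k-1,L)$ with \emph{some} last vertex $y_{k-1}$, over which you have no control; in the example above one can legitimately obtain an immersion of $F(k-1,L)$ whose branch vertices $y_1,\dots,y_{k-2}$ lie in $B_1$ and whose last vertex is $y_{k-1}=b$, and then no extension to an $F(k,l)$ with this prescribed $(k-1)$-st branch vertex exists, even using all arcs of $D$.

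This is precisely the difficulty the paper's proof is built to circumvent. Instead of fixing the source and trying to extend, the paper takes a minimum counterexample $D$ (allowing $c_1$ exceptional vertices of outdegree $0$), finds an immersion of $F(k,l)$, and, when the extension from its last vertex $x_k$ fails, uses the Menger cuts $E_y$ and the covering sets $C_y$, $S_y$ to extract a strictly smaller digraph $H$ (the part $S_y$ together with $X$ and shortcut arcs along the paths $P_{i,j}$) in which all but at most $c_1$ vertices still have outdegree greater than $f(k+1,l)$ and any immersion of $F(k+1,l)$ lifts back to $D$; minimality of $D$ then gives the contradiction. Crucially, the recursion re-solves the \emph{whole} immersion problem inside $H$, so the vertex playing the role of $x_k$ is re-chosen at every stage (in your example, the argument would effectively relocate the immersion inside a blob, where extension is easy). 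To salvage your plan you would need some mechanism of this kind --- e.g.\ a statement of the form ``either the fixed source extends, or there is a much smaller vertex set, not containing the source, out of which only few arcs leave, allowing a global restart'' --- rather than a source-fixed lemma, and that is essentially the content of the paper's $C_y$/$S_y$/auxiliary digraph $R$ construction.
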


\begin{proof}
We prove the result for $f(k,l) = 2k^3l^2$ and $l \geq 2$.
We proceed by induction on $k$.
For $k=1$ this is trivial because $F(1,l)$ is one vertex. Suppose now that the result
holds for $k$ and assume for a contradiction that it does not hold for $k+1$. 
Let $D$ be the digraph with the smallest number of arcs and vertices such that $D$ has multiplicity at most $(k+1)l$,
all but at most $c_1 = k + (k+1)l$ vertices have outdegree at least $f(k+1,l)$ and without an immersion of $F(k+1,l)$. 
By minimality of $D$, every vertex has outdegree exactly $f(k+1,l)$,
expect $c_1$ of them with outdegree 0. Call $T$ the set of vertices of outdegree 0.
By removing $T$ and some of the parallel arcs, 
we obtain a digraph of outdegree greater than $ d' = f(k+1,l) - c_1(k+1)l - \frac{f(k+1,l)}{k+1} $ with multiplicity $kl$.
Because $f(k+1,l) - f(k,l) =  2(3k^2 + 3k + 1)l^2$ and $c_1(k+1)l +\frac{f(k+1,l)}{(k+1)} = k(k+1)l + 3(k+1)^2l^2$, 
we get that  $d' \geq f(k,l)$ and by induction there exists an immersion of $F(k,l)$ in $D - T$.
Call $X = \{x_1, \cdots, x_k\}$ the set of vertices of the immersion and $P_{i,j}$ the $jth$ directed path of
this immersion from $x_i$ to $x_{i+1}$. We can assume this immersion is of minimum size, so that 
every vertex in $P_{i,j}$ has exactly one outgoing arc in $P_{i,j}$. Let $D'$ be the digraph
obtained from $D$ by removing all the arcs of the $P_{i,j}$ and the vertices $x_1, \dots, x_{k-1}$. 
By the previous remark, the degree of each vertex in $D'$ is either $0$ if this vertex belongs to $T$ or at least $ f(k+1,l) - (k-1)l -(k-1)(k+1)l$.

For every vertex $y \in D' - x_k$, there do not exist $l$ arc-disjoint
directed paths from $x_k$ to $y$ in $D'$, for otherwise there would be an immersion
of $F(k+1,l)$ in $D$.
Hence, by Menger's Theorem there exists a set $E_y$ of less than 
$l$ arcs such that there is no directed path from $x_k$ to $y$ in $D' \setminus E_y$.
Define $C_y$ for every vertex $y \in D' - x_k$
as the set of vertices which can reach $y$ in $D' \setminus E_y$.
Now take $Y$ a minimal set such that  $\cup_{y \in Y}C_y$ covers $D' - x_k$. We claim
that $Y$ consists of at least $c_2 \geq \frac{f(k+1,l) -(k-1)l - (k-1)(k+1)l}{l} \geq 2c_1$ elements, as
$ \cup_{y \in Y} E_y $ must contain all the arcs of $D'$ with $x_k$ as tail.

For each $y \in Y$, define $S_y$ as the set of vertices which belong to $C_y$ and no other $C_{y'}$ for $y' \in Y$.
Since $Y$ is minimal, every $S_y$ is non-empty.
Note that for $u \in S_y$, if there exists $y' \in Y \setminus y$ and $v \in C_{y'}$ such that $uv \in A(D)$, then $uv \in E_{y'}$. 
Note that $T \subset Y$ as vertices in $T$ have outdegree 0 and if
$y \in Y \setminus T$ then $S_y$ consists only of vertices of outdegree $f(k+1,l)$ in $D$.

Let $R$ be the digraph with vertex set $Y$ and arcs from $y$ to $y'$ if there is an arc from $S_y$ to $C_{y'}$. 
As noted before, $d^-_R(y) \leq | E_y | \leq l$. The average outdegree of the vertices
of $Y\setminus T$ in $R$ is then at most $\frac{c_1l + (c_2 - c_1) l  }{c_2-c_1} \leq 2l$. Let
$y$ be a vertex of $R \setminus T $ with outdegree at most this average.
Let $H$ be the digraph induced on $D'$ by the vertices in $Sy$ to which we add $X$, all the arcs 
that existed in $D$ (with multiplicity) from vertices of $S_y$ to vertices of $X$ and 
the following arcs: For each $P_{i,j}$, let $z_1, z_2,
\dots, z_l = P_{i,j} \cap Sy $, where $z_i$ appears before $z_{i+1}$ on $P_{i,j}$
and add all the arcs $(z_i, z_{i+1})$ to $H$. Note that, if $(x,y)$ is an arc of $D'$,
then by minimality of the copy of $F(k,l)$, every time $x$ appears before $y$ on
some $P_{i,j}$, then $P_{i,j}$ uses one of the arcs $(x,y)$. Thus for each pair
of vertices $x$ and $y$ in $H$, either $(x,y) \in A(D)$ and the number of $(x,y)$ arcs
in $H$ is equal to the one in $D$, or $(x,y) \not \in A(D)$ and the number of $(x,y)$ arcs
in $H$ is bounded by $(k-1)l$. This implies that $H$ has multiplicity at most $(k+1)l$.

\begin{claim}
$H$ is a digraph with multiplicity at most $(k+1)l$, such that all but at most $c_1$ vertices have outdegree greater
than $f(k+1,l)$ and $H$ does not contain an immersion of $F(k+1,l)$. 

\end{claim}

\begin{proofclaim}
Suppose $H$ contains an immersion of $F(k+1,l)$, then by replacing the new arcs
by the corresponding directed paths along the $P_{i,j}$ we get an immersion of $F(k+1,l)$ in $D$.
Moreover, we claim that the number of vertices in $H$ with outdegree smaller than $f(k+1, l)$
is at most $k + 2l+ (k-1)l = c_1$. Indeed, the vertices
of $H$ that can have outdegree smaller in $H$ than in $D$ are the $x_i$, or the vertices with
outgoing arcs in $E_{y'}$ for some $y' \in Y\setminus y$, or the vertices
along the $P_{i,j}$. But with the additions of the new arcs, we know that there
is at most one vertex per path $P_{i,j}$ that loses some outdegree in $H$. 
\end{proofclaim}

However, since $H$ is strictly smaller than $D$, we reach a contradiction.

\end{proof}

\section*{Acknowledgements}

The author wishes to thank Fr\'ed\'eric Havet and St\'ephan Thomass\'e for their useful comments on the manuscript.

\end{document}